\newtheorem{theorem}{Theorem}
\newtheorem{proposition}{Proposition}
\newtheorem{remark}{Remark}
\newenvironment{proof}[1][Proof]{\noindent\textbf{#1.} }{\ \rule{0.5em}{0.5em}}
\begin{document}

\title{\textbf{Bilateral Birth and death process in quantum calculus}}
\date{ }
\author{Lazhar Dhaouadi \thanks{
Institut Pr\'eparatoire aux Etudes d'Ing\'enieur de Bizerte, 7021 Zarzouna,
Tunisia.\quad E-mail: lazhardhaouadi@yahoo.fr}}
\maketitle

\begin{abstract}
In this paper I shall give the complete solution of the equations governing
the bilateral birth and death process on path set $\mathbb{R}_q=\{q^n,\quad
n\in\mathbb{Z}\}$ in which the birth and death rates $\lambda_n=q^{2\nu-2n}$
and $\mu_n=q^{-2n}$ where $0<q<1$ and $\nu>-1$ . The mathematical methods
employed here are based on $q$-Bessel Fourier analysis. \vspace{5mm} \newline
\noindent \textit{Keywords : Bilateral Birth and death process, $q$-Bessel function, $
q$-Hankel transform.} \vspace{3mm}\newline
\noindent \textit{2000 AMS Mathematics Subject Classification---Primary
33D15,47A05. }
\end{abstract}

\section{Introduction}

Birth and death processes were introduced in \cite{FE} by W. Feller in (1939) and have since
been used as models for population growth, queue formation, in epidemiology
and in many other areas of both theoretical and applied interest. From the
standpoint of the theory of stochastic processes they represent an important
special case of Markov processes with countable state spaces and continuous
parameters.\newline

The birth--death process is a special case of continuous-time Markov process
where the state transitions are of only two types: "births", which increase
the state variable by one and "deaths", which decrease the state by one. The
model's name comes from a common application, the use of such models to
represent the current size of a population where the transitions are literal
births and deaths.\newline

The purpose of this paper is to contribute to the knowledge of the
connection between some classe of birth-death processes and the $q$-theory.
We cite for example early result in this direction. The study of the
time-dependent behavior of birth and death processes involves many intricate
and interesting orthogonal polynomials, such as Charlier, Meixner, Laguerre,
Krawtchouk, and other polynomials from the Askey scheme. For example the
authors in \cite[p.350]{Ko} point out that the three-term recurrence
relation of the $q$-Lommel polynomials can be viewed as a three-term
recurrence relation as occurring in birth and death processes with value $%
\lambda_m=w^{-2}q^{-m}$ and $\mu_m=q^{-m}$.\newline

In \cite{B} the authors study the fundamental properties of classical and
quantum Markov processes generated by $q$-Bessel operators and their
extension to the algebra of all bounded operators on the Hilbert space $L^2$%
. They noticed the connection with a bilateral birth and death process
considered in this paper but without an explicite solution to the minimal
semigroup of the classical $q$-Bessel process. But they give an interesting
result about the uniqueness of such semi groupe, which is an important tool in our study.\newline

For birth and death processes with complicated birth and death rates, for
example, when rates are state dependent or nonlinear, it is almost
impossible to find closed form solutions of the transition functions. Due to
the difficulties involved in analytical methods, it is pertinent to develop
other techniques. In this paper, I shall give the complete solution of the
equations governing the bilateral birth and death process on path set $%
\mathbb{R}_q=\{q^n,\quad n\in\mathbb{Z}\}$ in which the birth and death
rates $\lambda_n=q^{2\nu-2n}$ and $\mu_n=q^{-2n}$ where $0<q<1$ and $\nu>-1$%
. The mathematical methods employed here are based on $q$-Bessel Fourier
analysis. In particular the $q$-Bessel operator, $q$-Bessel fourier
transform , $q$-translation operator and $q$-convolution product will appear
in a natural way during our study.\newline

\section{Preliminarily on $q$-Bessel Fourier analysis}

Assume that $0<q<1$ and $\nu >-1$. \ Let $a\in \mathbb{C}$, the $q$-shifted
factorial are defined by
\begin{equation*}
(a;q)_{0}=1,\quad (a;q)_{n}=\prod_{k=0}^{n-1}(1-aq^{k}),\quad (a;q)_{\infty
}=\prod_{k=0}^{\infty }(1-aq^{k}),
\end{equation*}%
and%
\begin{equation*}
\mathbb{R}_{q}=\left\{ q^{n},\quad n\in \mathbb{Z}\right\} .
\end{equation*}%
The $q$-Bessel operator is defined as follows \cite{D1}
\begin{equation*}
\Delta _{q,\nu }f(x)=\frac{1}{x^{2}}\Big[f(q^{-1}x)-(1+q^{2\nu
})f(x)+q^{2\nu }f(qx)\Big].
\end{equation*}%
The eigenfunction of $\Delta _{q,\nu }$ associated with the eigenvalue $%
-\lambda ^{2}$ is the function $x\mapsto j_{\nu }(\lambda x,q^{2})$, where $%
j_{\nu }(.,q^{2})$ is the normalized $q$-Bessel function defined by
\begin{equation*}
j_{\nu }(x,q^{2})=\sum_{n=0}^{\infty }(-1)^{n}\frac{q^{n(n+1)}}{(q^{2\nu
+2},q^{2})_{n}(q^{2},q^{2})_{n}}x^{2n}.
\end{equation*}%
It satisfies the following estimate
\begin{equation}
|j_{\nu }(q^{n},q^{2})|\leq \frac{(-q^{2};q^{2})_{\infty }(-q^{2\nu
+2};q^{2})_{\infty }}{(q^{2\nu +2};q^{2})_{\infty }}\left\{
\begin{array}{c}
1\quad \quad \quad \quad \quad \text{if}\quad n\geq 0 \\
q^{n^{2}-(2\nu +1)n}\quad \text{if}\quad n<0%
\end{array}%
\right. .  \label{e}
\end{equation}%
which show the asymptotic decreasing at infinity on $\mathbb{R}_{q}.$

\bigskip

The $q$-Jackson integral of a function $f$ defined on $\mathbb{R}_{q}$ by
\cite{J}
\begin{equation*}
\int_{0}^{\infty }f(t)d_{q}t=(1-q)\sum_{n\in \mathbb{Z}}q^{n}f(q^{n}).
\end{equation*}%
We denote by $\mathcal{L}_{q,p,\nu }$ the space of functions $f$ defined on $%
\mathbb{R}_{q}$ such that
\begin{equation*}
\Vert f\Vert _{q,p,\nu }=\left[ \int_{0}^{\infty }|f(x)|^{p}x^{2\nu +1}d_{q}x%
\right] ^{1/p}<\infty .
\end{equation*}%
The normalized $q$-Bessel function $j_{\nu }(.,q^{2})$ satisfies the
orthogonality relation \cite{D1}
\begin{equation}
c_{q,\nu }^{2}\int_{0}^{\infty }j_{\nu }(xt,q^{2})j_{\nu }(yt,q^{2})t^{2\nu
+1}d_{q}t=\delta _{q}(x,y),\quad \forall x,y\in \mathbb{R}_{q}^{+}
\label{e2}
\end{equation}%
where
\begin{equation*}
\delta _{q}(x,y)=\left\{
\begin{tabular}{l}
$0$ if $x\neq y$ \\
$\frac{1}{(1-q)x^{2(\nu +1)}}$ if $x=y$%
\end{tabular}%
\ \right. ,
\end{equation*}%
and
\begin{equation*}
c_{q,\nu }=\frac{1}{\left( 1-q\right) }\frac{(q^{2\nu +2},q^{2})_{\infty }}{%
(q^{2},q^{2})_{\infty }}.
\end{equation*}%
Let $f$ be a function defined on $\mathbb{R}_{q}$ then
\begin{equation*}
\int_{0}^{\infty }f(y)\delta _{q}(x,y)y^{2\nu +1}d_{q}y=f(x).
\end{equation*}%
The $q$-Bessel Fourier transform $\mathcal{F}_{q,\nu }$ is defined by \cite%
{D1,K}
\begin{equation*}
\mathcal{F}_{q,\nu }f(x)=c_{q,\nu }\int_{0}^{\infty }f(t)j_{\nu
}(xt,q^{2})t^{2\nu +1}d_{q}t,\quad \forall x\in \mathbb{R}_{q}^{+}.
\end{equation*}

Let $f\in \mathcal{L}_{q,1,\nu }$ then $\mathcal{F}_{q,\nu }f\in \mathcal{C}%
_{q,0}$ and we have
\begin{equation*}
\Vert \mathcal{F}_{q,\nu }f\Vert _{q,\infty }\leq c_{q,\nu }\Vert f\Vert
_{q,1,\nu }.
\end{equation*}%
Let $f$ be a function belongs to $\mathcal{L}_{q,p,\nu }$ where $p\geq 1$
then
\begin{equation}  \label{e4}
\mathcal{F}_{q,\nu }^{2}f=f.
\end{equation}
If $f$ satisfies one of the following conditions :

\begin{description}
\item[i)] $f\in\mathcal{L}_{q,1,\nu }\ $and$\ \mathcal{F}_{q,\nu }f\in
\mathcal{L}_{q,1,\nu }.$

\item[ii)] $f\in\mathcal{L}_{q,1,\nu }\cap \mathcal{L}_{q,2,\nu }$ where $%
p>2.$

\item[iii)] $f\in\mathcal{L}_{q,2,\nu }.$
\end{description}

Then we have $\Vert \mathcal{F}_{q,\nu }f\Vert _{q,2,\nu }=\Vert f\Vert
_{q,2,\nu }$. Note that if we denote by
\begin{equation*}
\langle f,g\rangle _{q,\nu }=\int_{0}^{\infty }f(x)g(x)x^{2\nu +1}d_{q}x
\end{equation*}%
the inner product in the space $\mathcal{L}_{q,2,\nu }$ then we have
\begin{equation*}
\Big\langle\mathcal{F}_{q,\nu }(f),\mathcal{F}_{q,\nu }(g)\Big\rangle_{q,\nu
}=\langle f,g\rangle _{q,\nu},\quad \forall f,g\in \mathcal{L}_{q,2,\nu}.
\end{equation*}

\bigskip

The $q$-translation operator is defined by
\begin{equation*}
T_{q,x}^{\nu }f(y)=c_{q,\nu }\int_{0}^{\infty }\mathcal{F}_{q,\nu
}f(t)j_{\nu }(yt,q^{2})j_{\nu }(xt,q^{2})t^{2\nu +1}d_{q}t.
\end{equation*}
Let us now introduce
\begin{equation*}
Q_\nu=\Big\{q\in ]0,1[,\quad T^\nu_{q,x}\quad \text{is positive for all}%
\quad x\in\mathbb{R}_q\Big\},
\end{equation*}
the set of the positivity of $T^\nu_{q,x}$. We recall that $T^\nu_{q,x}$ is
called positive if $T^\nu_{q,x}f\geq 0$ for $f\geq 0$.\newline

In \cite{FD} it was proved that if $-1<\nu<\nu^{\prime }$ then $%
Q_{\nu}\subset Q_{\nu^{\prime }}$. As a consequence :

\begin{description}
\item[-] If $0\leq \nu$ then $Q_{\nu}=]0,1[.$

\item[-] If $-\frac{1}{2}\leq \nu<0$ then $]0,q_{0}]\subset Q_{-\frac{1}{2}%
}\subset $ $Q_{\nu}\subsetneq]0,1[$,\quad $q_0\simeq 0.43$.

\item[-] If $-1<\nu\leq -\frac{1}{2}$ then $Q_{\nu}\subset Q_{-\frac{1}{2}}.$
\end{description}

Let $f\in \mathcal{L}_{q,p,\nu }$ then $T_{q,x}^{\nu }f$ exist and we have
\begin{equation*}
\int_{0}^{\infty }T_{q,x}^{\nu }f(y)y^{2\nu +1}d_{q}y=\int_{0}^{\infty
}f(y)y^{2\nu +1}d_{q}y.
\end{equation*}%
If we assume that $T_{q,x}^{\nu }$ is positif then
\begin{equation*}
\Vert T_{q,x}^{\nu }f\Vert _{q,p,\nu }\leq \Vert f\Vert _{q,p,\nu }.
\end{equation*}
The $q$-convolution product is given as follows \cite{D1}
\begin{equation*}
f\ast _{q}g=\mathcal{F}_{q,\nu }\left[ \mathcal{F}_{q,\nu }f\times \mathcal{F%
}_{q,\nu }g\right] .
\end{equation*}
Let $1\leq p\leq 2$ and $1\leq r,s$ such that
\begin{equation*}
\frac{1}{p}+\frac{1}{r}-1=\frac{1}{s}.
\end{equation*}%
If $f\in \mathcal{L}_{q,p,\nu }$ and $g\in \mathcal{L}_{q,r,\nu }$ then $%
f\ast _{q}g$ exist and we have
\begin{equation*}
f\ast _{q}g(x)=c_{q,\nu }\int_{0}^{\infty }T_{q,x}^{\nu }f(y)g(y)y^{2\nu
+1}d_{q}y.
\end{equation*}%
In addition, if $1\leq r\leq 2$ then
\begin{equation*}
\mathcal{F}_{q,\nu }(f\ast _{q}g)=\mathcal{F}_{q,\nu }(f)\times \mathcal{F}%
_{q,\nu }(g).
\end{equation*}%
If $s\geq 2$ then $f\ast _{q}g\in \mathcal{L}_{q,s,\nu }$ and
\begin{equation}
\Vert f\ast _{q}g\Vert _{q,s,\nu }\leq c_{q,\nu }\Vert f\Vert _{q,p,\nu
}\times \Vert g\Vert _{q,r,\nu }  \label{e1}
\end{equation}%
If we assume that $T_{q,x}^{\nu }$ is positif then (\ref{e1}) hold true for
all $s\geq 1$.\newline

In the sequel, we will always assume $q\in Q_\nu$.\newline

\section{Bilateral birth and death processes on $\mathbb{R}_q$}

We consider a bilateral birth and death processes $X_t$ with parameter set ${%
\mathcal{T }}=[0,\infty)$ on the path set
\begin{equation*}
\mathbb{R}_q=\{q^n,\quad n\in\mathbb{Z}\}
\end{equation*}
with stationary transition probabilities
\begin{equation*}
p_{i,j}(h)=\mathrm{Pr}\Big[X_{t+h}=q^j\Big|X_t=q^i\Big]
\end{equation*}
which is not depending on $t$. In addition we assume that $p_{i,j}(h)$
satisfy

\begin{itemize}
\item $p_{i,i+1}(h)=q^{2\nu-2i}h+o(h)$ as $h\downarrow 0$, \quad $\forall
i\in\mathbb{Z}$.

\item $p_{i,i-1}(h)=q^{-2i}h+o(h)$ as $h\downarrow 0$, \quad $\forall i\in%
\mathbb{Z}$.

\item $p_{i,i}=1-(q^{2\nu-2i}+q^{-2i})h+o(h)$ as $h\downarrow 0$, \quad $%
\forall i\in\mathbb{Z}$.

\item $p_{i,j}(h)=0$ if $|i-j|>1$.
\end{itemize}

The state space of this process is interpreted as jump rate from the point $%
q^{i}$ to $q^{i+1}$ or $q^{i-1}$. The process will be parameterized by a
continuous time $t$, but its trajectories will not be continuous.

\bigskip

Fixe an arbitrary state $q^{r}$ and let
\begin{equation*}
p_{nr}(t)=\mathrm{Pr}\left[ \left. X_{t}=q^{n}\right\vert X_{0}=q^{r}\right]
.
\end{equation*}%
We use the following notation if there is no confusion
\begin{equation*}
p_{n}(t)=p_{nr}(t).
\end{equation*}%
Then we obtain%
\begin{eqnarray*}
p_{n}(t+h) &=&\mathrm{Pr}\left[ \left. X_{t+h}=q^{n}\right\vert X_{0}=q^{r}%
\right] \\
&=&\mathrm{Pr}\left[ \left. X_{t}=q^{n-1}\right\vert X_{0}=q^{j}\right]
p_{n-1,n}(h)+\mathrm{Pr}\left[ \left. X_{t}=q^{n}\right\vert X_{0}=q^{r}%
\right] p_{n,n}(h)+ \\
&&\mathrm{Pr}\left[ \left. X_{t}=q^{n+1}\right\vert X_{0}=q^{r}\right]
p_{n+1,n}(h) \\
&=&q^{2\nu -2(n-1)}hp_{n-1}(t)+\left[ 1-(q^{2\nu -2n}+q^{-2n})h\right]
p_{n}(t)+q^{-2(n+1)}hp_{n+1}(t)+o(h).
\end{eqnarray*}%
Taking the limit $h\rightarrow 0$ we must have the following differential
equation
\begin{equation*}
\frac{d}{dt}p_{n}(t)=q^{2\nu -2(n-1)}p_{n-1}(t)-(q^{2\nu
-2n}+q^{-2n})p_{n}(t)+q^{-2(n+1)}p_{n+1}(t).
\end{equation*}%
Let $x_{n}=q^{n}$, $x_{r}=q^{r}$ and
\begin{equation*}
P_{x_{r}}(x_{n},t)=\frac{1}{(1-q)}q^{-2(\nu +1)n}p_{n}(t)=\frac{1}{(1-q)}%
x_{n}^{-2(\nu +1)}p_{n}(t),
\end{equation*}%
We obtain the following equation :
\begin{eqnarray*}
\frac{d}{dt}P_{x_{r}}(x_{n},t) &=&\frac{P_{x_{r}}(x_{n-1},t)-(1+q^{2\nu
})P_{x_{r}}(x_{n},t)+q^{2\nu }P_{x_{r}}(x_{n+1},t)}{x_{n}^{2}} \\
&=&\frac{P_{x_{r}}(q^{-1}x_{n},t)-(1+q^{2\nu })P_{x_{r}}(x_{n},t)+q^{2\nu
}P_{x_{r}}(qx_{n},t)}{x_{n}^{2}}.
\end{eqnarray*}%
Replacing $x_{n}$ by an arbitrary $x\in \mathbb{R}_{q}$ we deduce the $q$%
-Fokker-Planck equation
\begin{equation*}
\frac{\partial }{\partial t}P_{x_{r}}(x,t)=\Delta _{q,\nu }P_{x_{r}}(x,t).
\end{equation*}%
The solution is explicitly written as follows
\begin{align*}
P_{x_{r}}(x,t)& =c_{q,\nu }^{2}\int_{0}^{\infty }e^{-ty^{2}}j_{\nu
}(xy,q^{2})j_{\nu }(x_{r}y,q^{2})y^{2\nu +1}d_{q}y \\
& =c_{q,\nu }\mathcal{F}_{q,\nu }\left[ z\rightarrow e^{-tz^{2}}j_{\nu
}(x_{r}z,q^{2})\right] (x) \\
& =c_{q,\nu }T_{q,x_{r}}^{\nu }\mathcal{F}_{q,\nu }\left[ z\rightarrow
e^{-tz^{2}}\right] (x). \\
& =c_{q,\nu }T_{q,x_{r}}^{\nu }\mathcal{\rho }_{t}(x),
\end{align*}
where%
\begin{equation*}
\mathcal{\rho }_{t}(x)=\mathcal{F}_{q,\nu }\left[ z\rightarrow e^{-tz^{2}}%
\right] (x).
\end{equation*}%
In fact
\begin{eqnarray*}
\frac{\partial }{\partial t}P_{x_{r}}(x,t) &=&c_{q,\nu }^{2}\int_{0}^{\infty
}-y^{2}e^{-ty^{2}}j_{\nu }(xy,q^{2})j_{\nu }(x_{r}y,q^{2})y^{2\nu +1}d_{q}y
\\
&=&c_{q,\nu }^{2}\int_{0}^{\infty }e^{-ty^{2}}\left[ \Delta _{q,\nu }j_{\nu
}(xy,q^{2})\right] j_{\nu }(x_{r}y,q^{2})y^{2\nu +1}d_{q}y \\
&=&\Delta _{q,\nu }\left[ c_{q,\nu }^{2}\int_{0}^{\infty }e^{-ty^{2}}j_{\nu
}(xy,q^{2})j_{\nu }(x_{r}y,q^{2})y^{2\nu +1}d_{q}y\right] \\
&=&\Delta _{q,\nu }P_{x_{r}}(x,t).
\end{eqnarray*}
The derivative under integral sign follows from (\ref{e}). Hence
\begin{equation}  \label{e3}
p_{nr}(t)=(1-q)q^{2(\nu +1)n}P_{x_{r}}(x_{n},t)
\end{equation}

\begin{proposition}
The stationary transition probabilities $p_{nr}(t)$ satisfies :

\begin{description}
\item[a.] $p_{nr}(t)\geq 0$.

\item[b.] $\sum_{n\in \mathbb{Z}}p_{nr}(t)=1$.

\item[c.] $p_{nr}(0)=\delta (n,r).$

\item[d.] $p_{nr}(t+s)=\sum_{k\in \mathbb{Z}}p_{nk}(t)p_{kr}(s).$
\end{description}
\end{proposition}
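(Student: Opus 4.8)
My plan is to reduce every assertion to the representation \eqref{e3}, $p_{nr}(t)=(1-q)q^{2(\nu+1)n}P_{x_r}(x_n,t)$, combined with the two faces of $P_{x_r}$ already available: the spectral form $P_{x_r}(x,t)=c_{q,\nu}^2\int_0^\infty e^{-ty^2}j_\nu(xy,q^2)j_\nu(x_ry,q^2)y^{2\nu+1}d_qy$ and the translation form $P_{x_r}=c_{q,\nu}T_{q,x_r}^\nu\rho_t$. The one bookkeeping identity I would record at the outset is that the $q$-Jackson integral turns a bilateral sum into an integral, namely $\sum_{k\in\mathbb{Z}}(1-q)q^{2(\nu+1)k}f(q^k)=\int_0^\infty f(x)x^{2\nu+1}d_qx$; this is what converts the sums in (b) and (d) into the $q$-Bessel machinery.

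For part (c) I would set $t=0$ in the spectral form: the orthogonality relation \eqref{e2} gives $P_{x_r}(x,0)=\delta_q(x,x_r)$, so $p_{nr}(0)=(1-q)q^{2(\nu+1)n}\delta_q(q^n,q^r)$, which vanishes for $n\neq r$ and equals $(1-q)q^{2(\nu+1)r}\cdot[(1-q)q^{2(\nu+1)r}]^{-1}=1$ for $n=r$, i.e.\ $\delta(n,r)$. For part (b) I would convert the sum to $\int_0^\infty P_{x_r}(x,t)x^{2\nu+1}d_qx$, replace $P_{x_r}$ by $c_{q,\nu}T_{q,x_r}^\nu\rho_t$, and invoke the invariance of the weighted integral under $T_{q,x_r}^\nu$ to strip off the translation; what remains is $c_{q,\nu}\int_0^\infty\rho_t(x)x^{2\nu+1}d_qx=\mathcal{F}_{q,\nu}\rho_t(0)=\mathcal{F}_{q,\nu}^2[z\mapsto e^{-tz^2}](0)=1$ by \eqref{e4}. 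I would first note $z\mapsto e^{-tz^2}\in\mathcal{L}_{q,p,\nu}$, which is immediate from the super-exponential decay at infinity and the $q^{2(\nu+1)n}$-summability near $0$, so that \eqref{e4} applies.

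Part (a) is where the real content sits. Since $(1-q)q^{2(\nu+1)n}>0$, positivity of $p_{nr}(t)$ is equivalent to $P_{x_r}(x_n,t)\geq 0$, and since $q\in Q_\nu$ makes $T_{q,x_r}^\nu$ positive, the chain $\rho_t\geq 0\Rightarrow T_{q,x_r}^\nu\rho_t\geq 0\Rightarrow P_{x_r}\geq 0\Rightarrow p_{nr}\geq 0$ reduces everything to the single claim $\rho_t=\mathcal{F}_{q,\nu}[z\mapsto e^{-tz^2}]\geq 0$ on $\mathbb{R}_q^+$. This is the main obstacle: the defining series of $\rho_t$ is alternating, so the sign cannot be read off termwise. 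I would establish it either from an explicit closed form of this $q$-Gaussian heat kernel, or structurally by noting that $\Delta_{q,\nu}$ is a birth--death generator whose off-diagonal coefficients $x^{-2}$ and $q^{2\nu}x^{-2}$ are positive, so that $e^{t\Delta_{q,\nu}}$ is positivity preserving and $\rho_t$ is, up to the normalization $c_{q,\nu}$, its action on a point mass.

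Finally, part (d) is the Chapman--Kolmogorov relation. Expanding $p_{nk}(t)p_{kr}(s)$ through \eqref{e3} and converting $\sum_{k\in\mathbb{Z}}$ to a $q$-integral in $x=q^k$, the symmetry $P_{x_k}(x_n,t)=P_{x_n}(x_k,t)$ (visible from the spectral form) reduces the claim to $\int_0^\infty P_{x_n}(x,t)P_{x_r}(x,s)x^{2\nu+1}d_qx=P_{x_r}(x_n,t+s)$. Substituting the two double integrals and interchanging the order of integration, the inner $x$-integral collapses by the orthogonality relation \eqref{e2} to $c_{q,\nu}^{-2}\delta_q(z,y)$; the $\delta_q$ then forces the two spectral variables to coincide, fusing $e^{-tz^2}e^{-sz^2}=e^{-(t+s)z^2}$ and returning precisely $P_{x_r}(x_n,t+s)$. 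The only delicate point is the Fubini interchange together with the convergence of the bilateral sum, both of which I would justify from the decay estimate \eqref{e} on $j_\nu$ and the Gaussian factors $e^{-tz^2},e^{-sz^2}$, which dominate the whole triple sum.
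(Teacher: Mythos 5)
Your parts (b), (c), and (d) are correct and essentially reproduce the paper's own argument: (b) and (c) are identical to the paper (translation invariance of the weighted $q$-integral plus the inversion formula \eqref{e4}, and the orthogonality relation \eqref{e2} at $t=0$), while in (d) your Fubini-plus-orthogonality computation is the paper's appeal to the Parseval identity $\langle \mathcal{F}_{q,\nu }f,\mathcal{F}_{q,\nu }g\rangle _{q,\nu }=\langle f,g\rangle _{q,\nu }$ with the proof of that identity unpacked; nothing is gained or lost there.

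The genuine gap is in (a). You correctly reduce everything to the single claim $\rho _{t}=\mathcal{F}_{q,\nu }\left[ z\mapsto e^{-tz^{2}}\right] \geq 0$ on $\mathbb{R}_{q}$, and you correctly flag it as the main obstacle --- but you then do not prove it; you offer two sketches, and neither survives scrutiny. The paper does not prove it from scratch either: it imports it as a known, non-trivial theorem (established in the proof of Theorem 1 of \cite{D2}), which is the legitimate move. Your first sketch (``an explicit closed form of this $q$-Gaussian heat kernel'') is unsubstantiated: no closed form with manifestly positive terms is exhibited, and as you note the defining series is alternating. Your second, structural sketch is circular. For a birth--death generator with \emph{unbounded} rates ($\mu _{n}=q^{-2n}\rightarrow \infty $ as $n\rightarrow \infty $), positivity preservation is not a formal consequence of the sign of the off-diagonal coefficients; it is a property of the \emph{minimal} semigroup constructed probabilistically. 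But $\rho _{t}$ is defined spectrally (multiplication by $e^{-ty^{2}}$ on the $\mathcal{F}_{q,\nu }$ side), and identifying this spectral solution with the minimal semigroup is exactly what is at stake: by Proposition 2 (via \cite{B}), the Kolmogorov equations admit a unique solution only when $\nu \geq 0$, so for $-1<\nu <0$, $q\in Q_{\nu }$, one cannot silently equate ``the'' semigroup generated by $\Delta _{q,\nu }$ with the spectral kernel whose positivity is to be proved. In short: your reduction is right and matches the paper, but the key positivity fact is a theorem of \cite{D2}, and neither of your sketches recovers it.
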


\begin{proof}
In fact
\begin{equation*}
p_{nr}(t)=(1-q)q^{2(\nu +1)n}P_{x_{r}}(x_{n},t)=c_{q,\nu }(1-q)q^{2(\nu
+1)n}T_{q,x_{r}}^{\nu }\mathcal{F}_{q,\nu }\left[ z\rightarrow e^{-tz^{2}}%
\right] (x_{n}).
\end{equation*}%
In the proof of \cite[Theroem 1]{D2} we have proved that $\mathcal{F}_{q,\nu
}\left[ e^{-tz^{2}}\right] $ is a positive function. The Positivity of the
generalized translation operator $T_{q,x_{0}}$ implies that $p_{nr}(t)\geq 0$%
.

\bigskip

It also satisfies the normalization of the total probability :
\begin{align*}
\sum_{n\in \mathbb{Z}}p_{nr}(t)& =(1-q)\sum_{n\in \mathbb{Z}}x_{n}^{2(\nu
+1)}P_{x_{r}}(x_{n},t) \\
& =\int_{0}^{\infty }P_{x_{r}}(x,t)x^{2\nu +1}d_{q}x \\
& =c_{q,\nu }\int_{0}^{\infty }T_{q,x_{r}}^{\nu }\mathcal{F}_{q,\nu }\left[
z\rightarrow e^{-tz^{2}}\right] (x)x^{2\nu +1}d_{q}x \\
& =c_{q,\nu }\int_{0}^{\infty }\mathcal{F}_{q,\nu }\left[ z\rightarrow
e^{-tz^{2}}\right] (x)x^{2\nu +1}d_{q}x \\
& =\mathcal{F}_{q,\nu }^{2}[z\rightarrow e^{-tz^{2}}](0)=e^{-ty^{2}}\Bigg|%
_{y=0}=1.
\end{align*}%
To show that it satisfies the initial conditions we use formula (\ref{e2}) :
\begin{eqnarray*}
p_{nr}(0) &=&(1-q)x_{n}^{2(\nu +1)}P_{x_{r}}(x_{n},0) \\
&=&(1-q)x_{n}^{2(\nu +1)}c_{q,\nu }^{2}\int_{0}^{\infty }j_{\nu
}(x_{n}y,q^{2})j_{\nu }(x_{r}y,q^{2})y^{2\nu +1}d_{q}y \\
&=&(1-q)x_{n}^{2(\nu +1)}\delta _{q}(x_{r},x_{n})=\delta (n,r).
\end{eqnarray*}%
The processes satisfy the markovian property :
\begin{eqnarray*}
&&\sum_{k\in \mathbb{Z}}p_{nk}(t)p_{kr}(s) \\
&=&(1-q)^{2}c_{q,\nu }^{2}x_{r}^{2(\nu +1)}\sum_{k\in \mathbb{Z}%
}x_{k}^{2(\nu +1)}\mathcal{F}_{q,\nu }\left[ z\rightarrow e^{-tz^{2}}j_{\nu
}(x_{n}z,q^{2})\right] (x_{k})\mathcal{F}_{q,\nu }\left[ z\rightarrow
e^{-sz^{2}}j_{\nu }(x_{r}z,q^{2})\right] (x_{k}) \\
&=&(1-q)c_{q,\nu }^{2}x_{r}^{2(\nu +1)}\int_{0}^{\infty }\mathcal{F}_{q,\nu }%
\left[ z\rightarrow e^{-tz^{2}}j_{\nu }(x_{n}z,q^{2})\right] (x)\mathcal{F}%
_{q,\nu }\left[ z\rightarrow e^{-sz^{2}}j_{\nu }(x_{r}z,q^{2})\right]
(x)x^{2\nu +1}d_{q}x \\
&=&(1-q)c_{q,\nu }^{2}x_{r}^{2(\nu +1)}\left\langle \mathcal{F}_{q,\nu }%
\left[ z\rightarrow e^{-tz^{2}}j_{\nu }(x_{n}z,q^{2})\right] ,\mathcal{F}%
_{q,\nu }\left[ z\rightarrow e^{-sz^{2}}j_{\nu }(x_{r}z,q^{2})\right]
\right\rangle _{q,\nu } \\
&=&(1-q)x_{r}^{2(\nu +1)}c_{q,\nu }^{2}\left\langle e^{-tz^{2}}j_{\nu
}(x_{n}z,q^{2}),e^{-sz^{2}}j_{\nu }(x_{r}z,q^{2})\right\rangle _{q,\nu } \\
&=&(1-q)x_{r}^{2(\nu +1)}c_{q,\nu }^{2}\int_{0}^{\infty
}e^{-(t+s)z^{2}}j_{\nu }(x_{n}z,q^{2})j_{\nu }(x_{r}z,q^{2})z^{2\nu +1}d_{q}z
\\
&=&p_{nr}(t+s).
\end{eqnarray*}
\end{proof}

\begin{proposition}
The solution given by (\ref{e3}) is unique if and only if $\nu\geq 0$.
\end{proposition}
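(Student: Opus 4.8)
The plan is to reduce uniqueness to non-explosion of the minimal birth and death process. For a conservative $q$-matrix the transition function solving the Kolmogorov system is uniquely determined exactly when the minimal process is honest, i.e. cannot reach the two ends of $\mathbb{R}_q$ in finite time; this is the kind of uniqueness statement for the $q$-Bessel semigroup proved in \cite{B}, which I would take as the main tool. So I would first recast the claim as: the solution (\ref{e3}) is unique iff the process with rates $\lambda_n=q^{2\nu-2n}$, $\mu_n=q^{-2n}$ reaches neither $n\to+\infty$ (i.e. $x\to0$) nor $n\to-\infty$ (i.e. $x\to\infty$) in finite time.

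I would then introduce the potential coefficients $\pi_n$ fixed by $\pi_{n+1}/\pi_n=\lambda_n/\mu_{n+1}=q^{2(\nu+1)}$, so that $\pi_n=q^{2(\nu+1)n}=x_n^{2(\nu+1)}$; these are exactly the weights of the measure $x^{2\nu+1}d_qx$ appearing in (\ref{e3}), and $\Delta_{q,\nu}$ is symmetric for them. At the end $+\infty$ the non-explosion (Reuter) criterion is the divergence of $\sum_n(\lambda_n\pi_n)^{-1}\sum_{k\le n}\pi_k$. Here $\sum_{k\le n}\pi_k$ converges to a positive constant and $\lambda_n\pi_n=q^{2\nu(n+1)}$, so the series is comparable to $\sum_n q^{-2\nu n}$, which diverges iff $\nu\ge0$ and converges iff $\nu<0$.

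For the end $-\infty$ I would pass to the reflected chain (indexing by $m=-n$, which interchanges the roles of $\lambda$ and $\mu$): both rates now tend to $0$, the reflected potential coefficients grow like $q^{-2(\nu+1)m}$, and the corresponding series is comparable to $\sum_m q^{-2m}$, which diverges for every $\nu>-1$. Thus $x\to\infty$ is inaccessible for all admissible $\nu$ and never produces non-uniqueness. Combining the two ends, the minimal process is non-explosive iff $\nu\ge0$; since the criterion of \cite{B} is an equivalence, this yields both directions, the explosion at $x=0$ for $-1<\nu<0$ giving rise to more than one honest transition function and hence to non-uniqueness.

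The delicate point, and the step I would justify most carefully, is the reduction itself: one must use the right notion of uniqueness. It is Markov/Reuter uniqueness (non-explosion of the minimal semigroup) that is at stake, not essential self-adjointness of $\Delta_{q,\nu}$ on $\mathcal{L}_{q,2,\nu}$; the latter is governed by the limit-point condition, and a two-sided $\ell^2$ computation at the origin would instead give the threshold $\nu\ge1$, since both solutions $u_n=1$ and $u_n=q^{-2\nu n}$ of $\Delta_{q,\nu}u=0$ are square-summable against $\pi_n$ near $+\infty$ when $\nu<1$. Making sure that the cited result from \cite{B} selects the honest minimal semigroup, and that the bilateral boundary structure is correctly handled by testing the two ends independently, is where the argument must be pinned down.
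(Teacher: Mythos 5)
Your proposal is correct, and it is worth comparing with the paper's own proof, which is pure citation: the author simply invokes Pruitt's necessary-and-sufficient condition for uniqueness of bilateral birth-and-death transition functions \cite[Theorem 3.5]{P} and then declares the proposition an immediate consequence of the uniqueness theorem \cite[Theorem 2]{B}; no computation appears, so the reader never sees where the threshold $\nu\geq 0$ comes from. Your route shares the same underlying reduction (uniqueness of the honest transition function is equivalent to non-explosion of the minimal process, which is exactly what \cite{P} and \cite{B} supply), but you then actually verify the Reuter--Pruitt series test with $\pi_n=q^{2(\nu+1)n}$ and $\lambda_n\pi_n=q^{2\nu(n+1)}$: at the end $n\to+\infty$ (i.e.\ $x\to 0$) the criterion series is comparable to $\sum_n q^{-2\nu n}$, divergent precisely for $\nu\geq 0$, while at the end $n\to-\infty$ (i.e.\ $x\to\infty$) it is comparable to $\sum_m q^{-2m}$ and diverges for every $\nu>-1$. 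This makes the proof self-contained modulo the classical criterion, exhibits the mechanism of non-uniqueness (escape to $x=0$ in finite time when $-1<\nu<0$), and correctly separates Markov uniqueness from essential self-adjointness of $\Delta_{q,\nu}$, a distinction the paper leaves buried in the references. One notational point to repair: in the bilateral setting the inner sum of the criterion must be truncated at a fixed reference state, $\sum_{0\leq k\leq n}\pi_k$, not taken over all $k\leq n$, since the full sum $\sum_{k=-\infty}^{n}q^{2(\nu+1)k}$ diverges because $\pi_k\to\infty$ as $k\to-\infty$; your assertion that the inner sum is bounded shows you intend the truncated sum, so this is a flaw of notation rather than of substance.
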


\begin{proof}
In fact in \cite[Theorem 3.5]{P} it was given a necessary and sufficient
condition that there is one and only one solution of a stationary transition
probabilities $p_{nr}(t)$ satisfies the conditions of Proposition 1. In our
situation the result is an immediate consequence of the uniqueness result
given by \cite[Theorem 2]{B}.
\end{proof}

\section{The transition semigroup}

To introduce the transition semigroup we follows that given in \cite{KM}.
But in our case this is a bilateral birth and death processes on the path
set $\mathbb{R}_{q}$ which must replace $\mathbb{N}$.\newline

We present here some connexions between the notations used in \cite{KM} and
the standard notations of $q$-Bessel Fourier analysis. Let
\begin{equation*}
\pi _{0}=1,\text{ \ \ \ \ }\pi _{n}=\frac{\lambda _{0}\ldots \lambda _{n-1}}{%
\mu _{1}\ldots \mu _{n}},\text{ \ \ }\forall n\in \mathbb{Z}^{\ast }
\end{equation*}%
In our case
\begin{equation*}
\lambda _{n}=q^{2\nu -2n}\text{ \ and }\mu _{n}=q^{-2n}\Rightarrow \pi
_{n}=q^{2(\nu +1)n}
\end{equation*}%
The $L^{2}(\pi )$ norm was introduced in \cite{KM} as follows
\begin{equation*}
\left\Vert f\right\Vert ^{2}=\sum_{n\in \mathbb{Z}}\left\vert
f(q^{n})\right\vert ^{2}\pi _{n}=\sum_{n\in \mathbb{Z}}\left\vert
f(q^{n})\right\vert ^{2}q^{2(\nu +1)n}=\left\Vert f\right\Vert _{q,2,\nu
}^{2}\Rightarrow L^{2}(\pi )=\mathcal{L}_{q,2,\nu }.
\end{equation*}%
Also for a given functions $f$ and $g$ in $L^{2}(\pi )$ we have
\begin{equation*}
\left\langle f,g\right\rangle =\sum_{n\in \mathbb{Z}}f(q^{n})\overline{%
g(q^{n})}\pi _{n}=\int_{0}^{\infty }f(x)\overline{g(x)}x^{2\nu
+1}d_{q}x=\left\langle f,g\right\rangle _{q,\nu }.
\end{equation*}%
Now looking at the operator $A$ defined by
\begin{equation*}
Af(q^{n})=\sum_{k\in \mathbb{Z}}a_{n,k}f(q^{k})
\end{equation*}%
where%
\begin{equation*}
a_{n,k}=\left\{
\begin{array}{c}
\mu _{n}\text{ \ if }k=n-1 \\
-(\lambda _{n}+\mu _{n})\text{ if }k=n \\
\lambda _{n}\text{\ if }k=n+1 \\
0\text{ \ otherwise}%
\end{array}%
\right. .
\end{equation*}%
A simple calculation
\begin{eqnarray*}
Af(q^{n}) &=&a_{n,n-1}f(q^{n-1})+a_{n,n}f(q^{n})+a_{n,n+1}f(q^{n+1}) \\
&=&\mu _{n}f(q^{n-1})-(\lambda _{n}+\mu _{n})f(q^{n})+\lambda _{n}f(q^{n+1})
\\
&=&q^{-2n}f(q^{n-1})-(q^{2\nu -2n}+q^{-2n})f(q^{n})+q^{2\nu -2n}f(q^{n+1}) \\
&=&\frac{f(q^{n-1})-(1+q^{2\nu })f(q^{n})+q^{2\nu }f(q^{n+1})}{q^{2n}},
\end{eqnarray*}%
lead to the fact that when $x=q^{n}$ we obtain
\begin{equation*}
Af(x)=\Delta _{q,\nu }f(x)\Rightarrow A=\Delta _{q,\nu }.
\end{equation*}%
Then
\begin{equation*}
\left\langle Af,g\right\rangle =\left\langle f,Ag\right\rangle \Rightarrow
\left\langle \Delta _{q,\nu }f,g\right\rangle _{q,\nu }=\left\langle
f,\Delta _{q,\nu }g\right\rangle _{q,\nu }.
\end{equation*}%
The operator $T_{t}$ introduced in \cite[p. 518]{KM} seem to be the
appropriate choies for the transition semigroup.

In our study we use $P_t$ instead of $T_t$. Let $P_{t}f(x)$ given for $t\geq
0$ and $x=q^{r}$ by
\begin{eqnarray*}
P_{t}f(x) &=&\mathbb{E}\left[ f(X_{t})\right] =\sum_{n\in \mathbb{Z}}\mathrm{%
Pr}\left[ \left. X_{t}=q^{n}\right\vert X_{0}=q^{r}\right] f(q^{n}) \\
&=&\sum_{n\in \mathbb{Z}}p_{nr}(t)f(q^{n}) \\
&=&(1-q)\sum_{n\in \mathbb{Z}}q^{2(\nu +1)n}P_{x}(q^{n},t)f(q^{n}) \\
&=&(1-q)c_{q,\nu }\sum_{n\in \mathbb{Z}}q^{2(\nu +1)n}T_{q,x}^{\nu }\mathcal{%
\rho }_{t}(q^{n})f(q^{n}) \\
&=&c_{q,\nu }\int_{0}^{\infty }T_{q,x}^{\nu }\mathcal{\rho }%
_{t}(y)f(y)y^{2\nu +1}d_{q}y \\
&=&\mathcal{\rho }_{t}\ast _{q}f(x),
\end{eqnarray*}%
with initial condition
\begin{equation*}
P_{0}f(x)=\sum_{n\in \mathbb{Z}}p_{nr}(0)f(q^{n})=\sum_{n\in \mathbb{Z}%
}\delta (r,n)f(q^{n})=f(q^{r})=f(x).
\end{equation*}%
From Proposition 1 we have the normalisation of total probability
\begin{equation*}
P_{t}1=\sum_{n\in \mathbb{Z}}p_{nr}(t)=1,
\end{equation*}
and the positivity
\begin{equation*}
f\geq0\Rightarrow P_tf\geq 0.
\end{equation*}
From \cite{KM} we see that $P_{t}$ define a bounded linear self-adjoint
operator of $L^{2}(\pi)$ into itself. The mapping
\begin{equation*}
t\mapsto P_{t}f
\end{equation*}%
is continuous on $0\leq t<\infty $ relative to the strong operator topology.
Also by the use of Proposition 1 we have
\begin{eqnarray*}
P_{t}P_{s}f(x) &=&\sum_{n\in \mathbb{Z}}p_{nr}(t)P_{s}f(q^{n}) \\
&=&\sum_{n\in \mathbb{Z}}p_{nr}(t)\left[ \sum_{k\in \mathbb{Z}%
}p_{kn}(s)f(q^{k})\right] =\sum_{n\in \mathbb{Z}}\left( \sum_{k\in \mathbb{Z}%
}p_{nr}(t)p_{kn}(s)\right) f(q^{k}) \\
&=&\sum_{k\in \mathbb{Z}}\left( \sum_{n\in \mathbb{Z}}p_{kn}(s)p_{nr}(t)%
\right) f(q^{k})=\sum_{k\in \mathbb{Z}}p_{kr}(t+s)f(q^{k}) \\
&=&P_{t+s}f(x),
\end{eqnarray*}
whenever $f$ and $g$ are with compact support. But functions with compact
support are \ dense in $L^{2}(\pi)$ and hence the semi group property
established in $L^{2}(\pi)$
\begin{equation*}
P_{t}P_{s}f=P_{t+s}f\text{ \ \ \ \ ,}\forall f\in L^{2}(\pi).
\end{equation*}%
A direct consequence is the fact that $P_{t}$ is positive definite
\begin{equation*}
\left\langle P_{t}f,f\right\rangle _{q,\nu }\geq 0\text{ \ \ \ ,}\forall
f\in L^{2}(\pi).
\end{equation*}

\begin{remark}
The operator $P_{t}$ was introduce in \cite{FD} and many of it's properties
was established.
\end{remark}

\begin{theorem}
A solution to the following $q$-heat equation:
\begin{equation*}
\frac{\partial }{\partial t}u(t,x)=\Delta _{q,\nu }u(t,x)
\end{equation*}%
with initial condition
\begin{equation*}
u(0,x)=f(x),\quad f\in\mathcal{L}_{q,p,\nu},
\end{equation*}%
is given by $u(t,x)=P_{t}f(x),\quad \forall x\in \mathbb{R}_{q}$. If $f\in%
\mathcal{L}_{q,2,\nu}$ then there exists a unique solution if and only if $%
\nu \geq 0.$
\end{theorem}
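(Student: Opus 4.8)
The plan splits naturally into an existence half and a uniqueness half.

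For existence, the plan is to verify directly that $u(t,x)=P_{t}f(x)$ meets both requirements. The initial condition $P_{0}f=f$ is already established above, so only the equation remains. First I would pass to the spectral side: since $P_{t}f=\rho_{t}\ast_{q}f$ with $\rho_{t}=\mathcal{F}_{q,\nu}[z\mapsto e^{-tz^{2}}]$, the very definition $f\ast_{q}g=\mathcal{F}_{q,\nu}[\mathcal{F}_{q,\nu}f\times\mathcal{F}_{q,\nu}g]$ together with $\mathcal{F}_{q,\nu}^{2}=\mathrm{id}$ (equation (\ref{e4})) gives $\mathcal{F}_{q,\nu}(\rho_{t})(y)=e^{-ty^{2}}$, whence
$$P_{t}f(x)=c_{q,\nu}\int_{0}^{\infty}e^{-ty^{2}}\,\mathcal{F}_{q,\nu}f(y)\,j_{\nu}(xy,q^{2})\,y^{2\nu+1}d_{q}y.$$
Then I would differentiate in $t$ under the $q$-integral — legitimate because the estimate (\ref{e}) on $j_{\nu}$ forces rapid decay and supplies a dominating summand — which brings down a factor $-y^{2}$. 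Applying $\Delta_{q,\nu}$ in $x$ under the same integral and invoking the eigenfunction identity $\Delta_{q,\nu}j_{\nu}(xy,q^{2})=-y^{2}j_{\nu}(xy,q^{2})$ brings down the same factor $-y^{2}$. The two computations coincide, giving $\partial_{t}P_{t}f=\Delta_{q,\nu}P_{t}f$; this is exactly the verification already carried out above for the kernel $P_{x_{r}}(x,t)$, now extended to arbitrary data through the spectral representation.

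For uniqueness the whole point is to reduce to Proposition 2, so I would begin by exposing a subtlety. A naive argument appears to yield uniqueness for \emph{every} $\nu>-1$: if $u,v$ are two $\mathcal{L}_{q,2,\nu}$ solutions sharing the datum $f$ and $w=u-v$, then formally $\widehat{w}(t,y):=\mathcal{F}_{q,\nu}[w(t,\cdot)](y)$ solves $\partial_{t}\widehat{w}=-y^{2}\widehat{w}$ with $\widehat{w}(0,\cdot)=0$, forcing $\widehat{w}\equiv0$ and hence $w\equiv0$ by inversion. The gap is that the intertwining $\mathcal{F}_{q,\nu}\Delta_{q,\nu}=(-y^{2})\mathcal{F}_{q,\nu}$ holds on the full relevant domain only when $\Delta_{q,\nu}$ is essentially self-adjoint on $\mathcal{L}_{q,2,\nu}$, which is precisely the dichotomy recorded in Proposition 2. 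Thus for $\nu\ge0$ the operator has a unique self-adjoint extension, $\mathcal{F}_{q,\nu}$ diagonalises it completely, the above argument becomes rigorous, and the solution is unique. For $-1<\nu<0$, Proposition 2 (via \cite[Theorem 2]{B}) asserts that the transition semigroup is \emph{not} unique: distinct self-adjoint realisations of $\Delta_{q,\nu}$ generate distinct semigroups $P_{t}^{(i)}$, each solving the forward equations with the same initial data, and the corresponding $u^{(i)}(t,\cdot)=P_{t}^{(i)}f$ are distinct $\mathcal{L}_{q,2,\nu}$ solutions. This yields non-uniqueness and closes the ``only if'' direction.

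The main obstacle is precisely this uniqueness dichotomy. The existence half is essentially routine once the spectral representation is in hand. The delicate point is to explain why the seemingly unconditional Fourier-side argument fails for $-1<\nu<0$ — that is, to make explicit that two solutions built from two different self-adjoint extensions genuinely disagree inside $\mathcal{L}_{q,2,\nu}$ while appearing identical under the ``wrong'' (non-self-adjoint) transform. Cleanly tying the functional-analytic non-uniqueness of the semigroup to non-uniqueness of the PDE solution, and checking that the extra solutions really lie in $\mathcal{L}_{q,2,\nu}$, is where the real work concentrates, and it is there that I would lean most heavily on Proposition 2 and \cite[Theorem 2]{B}.
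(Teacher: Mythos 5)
Your proposal follows the paper's proof essentially step for step: existence is obtained from the same spectral representation $P_{t}f(x)=c_{q,\nu}\int_{0}^{\infty}e^{-ty^{2}}\mathcal{F}_{q,\nu}f(y)j_{\nu}(xy,q^{2})y^{2\nu+1}d_{q}y$, with the inversion formula (\ref{e4}) giving the initial condition and differentiation under the $q$-integral combined with $\Delta_{q,\nu}j_{\nu}(xy,q^{2})=-y^{2}j_{\nu}(xy,q^{2})$ giving the equation, while uniqueness is reduced to Proposition 2 exactly as the paper does. Your extra discussion of self-adjoint extensions merely elaborates what the paper leaves implicit in its one-line appeal to Proposition 2 and \cite{B}, so the route is the same.
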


\begin{proof}
In fact let%
\begin{equation*}
u(t,x)=P_{t}f(x)=\mathcal{\rho }_{t}\ast _{q}f(x)
\end{equation*}%
Using the properties of the $q$-convolution product we obtain
\begin{eqnarray*}
u(t,x) &=&\mathcal{F}_{q,\nu }\left[ \mathcal{F}_{q,\nu }\mathcal{\rho }%
_{t}\times \mathcal{F}_{q,\nu }f\right] (x) \\
&=&c_{q,\nu }\int_{0}^{\infty }\mathcal{F}_{q,\nu }\mathcal{\rho }_{t}(y)%
\mathcal{F}_{q,\nu }f(y)j_{\nu }(xy,q^{2})y^{2\nu +1}d_{q}y \\
&=&c_{q,\nu }\int_{0}^{\infty }e^{-ty^{2}}\mathcal{F}_{q,\nu }f(y)j_{\nu
}(xy,q^{2})y^{2\nu +1}d_{q}y.
\end{eqnarray*}%
The inversion formula (\ref{e4}) lead to the initial condition
\begin{equation*}
u(0,x)=c_{q,\nu }\int_{0}^{\infty }\mathcal{F}_{q,\nu }f(y)j_{\nu
}(xy,q^{2})y^{2\nu +1}d_{q}y=\mathcal{F}_{q,\nu }^{2}f(x)=f(x).
\end{equation*}%
On the other hand%
\begin{eqnarray*}
\frac{\partial }{\partial t}u(t,x) &=&c_{q,\nu }\int_{0}^{\infty }\left(
-y^{2}\right) e^{-ty^{2}}\mathcal{F}_{q,\nu }f(y)j_{\nu }(xy,q^{2})y^{2\nu
+1}d_{q}y. \\
&=&c_{q,\nu }\int_{0}^{\infty }e^{-ty^{2}}\mathcal{F}_{q,\nu }f(y)\left[
\Delta _{q,\nu }j_{\nu }(xy,q^{2})\right] y^{2\nu +1}d_{q}y \\
&=&\Delta _{q,\nu }u(t,x).
\end{eqnarray*}%
The uniqueness is a consequence of Proposition 2.
\end{proof}


\begin{thebibliography}{99}
\bibitem{B} K. Bessadokh, F. Fagnola and S. Hachicha, Classical and Quantum
Markov processes associated with $q$-Bessel operators, Open Sys. Inf. Dyn,
27 (2020), DOI: S1230161220500055.

\bibitem{D1} L. Dhaouadi, On the $q$-Bessel Fourier transform, Bulletin of
Mathematical Analysis and Applications. 5 (2013), 2, 42-60. .

\bibitem{D2} L. Dhaouadi, Positivity of the generalized translation
associated with the $q$-Hankel transform and applications , Integral
Transforms and Special Functions. 26 (2015), 2, 102-117.

\bibitem{FE} W. Feller,  Die Grundlagen der Volterraschen Theorie des Kampfes ums Dasein in
wahrscheinlichkeitstheoretischer Behandlung, Acta Biotheoretica,  5 (1939), 11-40. 

\bibitem{FD} A. Fitouhi, L. Dhaouadi, Positivity of the Generalized
Translation Associated with the $q$-Hankel Transform, Constructive
Approximation. 34 (2011), 453-472.

\bibitem{G} G. Gasper and M. Rahman, Basic hypergeometric series.
Encycopedia of mathematics and its applications, Cambridge university press.
35 (1990).

\bibitem{J} F. H. Jackson, On a $q$-Definite Integrals. Quarterly Journal of
Pure and Application Mathematics, 41 (1910), 193-203 .

\bibitem{K} T.H. Koornwinder and R. F. Swarttouw, On $q$-analogues of the
Hankel and Fourier Transforms, Trans. A. M. S. 333 (1992), 445-461.

\bibitem{Ko} H. T. Koelink, Some basic Lommel polynomials, J. Approx.
Theory. 96 (1999), 345-365.

\bibitem{KM} S. Karlin and J. McGregor, The differential equations of birth
and death processes, and the Stieltjes moment problem, Transactions
Amer.Math.Soc. 85 (1957), 366-400.

\bibitem{P} W. E. Pruitt, Bilateral birth and death processes, Trans. Amer.
Math. Soc. 107 (1963), 508-525.
\end{thebibliography}
\end{document}